\newcommand{\T}{{\cal T}}
\newcommand{\Real}{\mathbb R}
\newcommand{\tm}{\T M}
\numberwithin{equation}{section} 
\numberwithin{figure}{section} 
\theoremstyle{plain}
\newtheorem*{theorem*}{Theorem}
\newtheorem{theorem}{Theorem}[section]
\newtheorem{lemma}[theorem]{Lemma}
\newtheorem{corollary}[theorem]{Corollary}
\newtheorem{remark}[theorem]{Remark}
\newtheorem{definition}[theorem]{Definition}
\newtheorem{property}[theorem]{Property}
\newtheorem{example}{Example}
\newtheorem*{acknowledgement*}{Acknowledgement}
\numberwithin{equation}{section}
\newcommand\overcirc[1]{\raisebox{10pt}{\tiny{$\circ$}}{\kern-7.5pt}\mbox{$#1$}}
\newcommand\undersym[2]{\raisebox{-6pt}{$#2$}{\kern-5pt}\mbox{$#1$}}
\newcommand\overdiamond[1]{\raisebox{10pt}{\small$\star$}{\kern-7.5pt}\mbox{$#1$}}
\newcommand\overast[1]{\raisebox{10pt}{\small$\ast$}{\kern-7.5pt}\mbox{$#1$}}
\newcommand\overlind[1]{\raisebox{10pt}{\small$\overline{{\hspace{2pt}}\star}$}{\kern-7.5pt}\mbox{$#1$}}
\newcommand\overlinc[1]{\raisebox{10pt}{\small$\overline{{\hspace{2pt}}\circ}$}{\kern-7.5pt}\mbox{$#1$}}
\newcommand\overlina[1]{\raisebox{10pt}{\small$\overline{{\hspace{1pt}}\ast}$}{\kern-7.5pt}\mbox{$#1$}}
\begin{document}

\title{{\bf{ Semi Concurrent vector fields in Finsler geometry}}}
\author{{\bf Nabil L. Youssef$\,^1$, S. G. Elgendi$^2$ and Ebtsam H. Taha$^1$}}
\date{}

\maketitle
\vspace{-1.15cm}

\begin{center}
{$^1$Department of Mathematics, Faculty of Science,\\
Cairo University, Giza, Egypt}
\vspace{-8pt}
\end{center}

\begin{center}
{$^2$Department of Mathematics, Faculty of Science,\\
Benha University, Benha, Egypt}
\vspace{-8pt}
\end{center}

\begin{center}
nlyoussef@sci.cu.edu.eg, nlyoussef2003@yahoo.fr\\
salah.ali@fsci.bu.edu.eg, salahelgendi@yahoo.com\\
ebtsam.taha@sci.cu.edu.eg, ebtsam.h.taha@hotmail.com

\end{center}

\vspace{0.5cm}
\noindent{\bf Abstract.} In the present paper, we introduce and investigate the notion of a semi concurrent vector field on a Finsler manifold. We show that some special Finsler manifolds admitting such vector fields turn out to be Riemannian. We prove that Tachibana's characterization of  Finsler manifolds admitting a concurrent vector field leads to Riemannain metrics.
We give an answer to the question raised in \cite{DWF}: "Is any n-dimensional Finsler manifold $(M,F)$, admitting a non-constant smooth function $f$ on $M$ such that $\frac{\partial f}{\partial x^i}\frac{\partial g^{ij}}{\partial y^k}=0$, a Riemannian manifold?\,". Various examples for conic Finsler and Riemannian spaces that admit semi-concurrent vector field are presented.
Finally, we conjectured that there is no regular Finsler non-Riemannian metric that admits a semi-concurrent vector field. In other words, a Finsler metric admitting a semi-concurrent vector field is necessarily either Riemannian or conic Finslerian.

\medskip
\medskip\noindent{\bf Keywords:\/}\,
Finsler metric; $C$-condition; $F$-condition; $CC$-condition; $SC$-condition; Tachibana's theorem; concurrent vector field; semi concurrent vector field

\medskip
\medskip\noindent{\bf MSC 2010:\/} 53C60; 53B40


\section{\textbf{Introduction}}

Concurrent vector fields in Riemannian geometry and related topics have been studied before by many authors, see for example \cite{Petrovic,Sasaki,Yano}.

In 1950, Tachibana \cite{Tachibana} generalized  the notion of a concurrent vector field from Riemannian geometry to Finsler geometry and characterized  the spaces admitting this kind of vector fields.

In 1974, Mastumoto and Eguchi \cite{meguchi} discussed the geometric consequences of the existence of  concurrent vector fields on  Finsler manifolds. They showed that a concurrent vector field controls the geometry of the underlying manifold.

Two years later, in 1976, Hashiguchi \cite{hashiguchi} treated a special kind of conformal change in Finsler geometry introducing the concept of C-conformal change. Hashiguchi was able to show that some of the results obtained by Matsumoto and Eguchi in the case when a Finsler manifold admits a concurrent vector field also hold when this space admit a C-conformal change.

In 2012, Youssef  et. al. \cite{beta2}, have introduced the concept of a B-condition which generalizes all the above mentioned  concepts. They have shown that some of the results previously obtained remain valid in this more general setting.

In 2012, Peyghan and Tayebi \cite{DW_Tayebi}  have proved that if ${M_1}_{f_2}\times _{f_1} M_2$ is a   doubly warped Finsler manifold, with  $f_1$ constant on $M_1$ and $f_2$ constant on $M_2$, then ${M_1}_{f_2}\times _{f_1} M_2$ is a Berwaldian manifold if and only if $M_1$ is Riemannain, $M_2$ is Berwaldian and \,$\dfrac{\partial f_1}{\partial x^i}\,C^{ij}_k=0.$

Recently, in 2015, Faghfouri and Hosseninoghi \cite{DWF} have treated the question: Does there exist a non-constant smooth function $f$ on a Finsler manifold $M$ such that $\dfrac{\partial f}{\partial x^i}\,C^{ij}_k=0$?  They showed that any two dimensional Finsler manifold admitting such a kind of function is necessarily Riemannian. They conjectured that this should hold for a Finsler manifold of arbitrary dimension. This problem, in a more general setting, is one of the main objects of the present paper.

In this paper, we investigate the relation between all the above mentioned concepts. We focus our attention to the most general concept, which we call \textit{semi-concurrent vector field}. According to Tashibana's theorem, we prove that a regular Finsler manifold which admits a concurrent vector field is Riemannian. We study some special Finsler manifolds admitting a semi-concurrent vector field. Various examples of non-Riemannain conic Finsler spaces admitting semi-concurrent vector fields are given. We investigate the cases under which an n-dimensional Finsler manifold $(M,F)$ admitting a non-constant smooth function $f$ on $M$ such that $\frac{\partial f}{\partial x^i}\frac{\partial g^{ij}}{\partial y^k}=0$ is a Riemannian manifold, giving an answer to the question of \cite{DWF}. Finally, we conjuncture that there is no regular Finsler metric admitting a semi-concurrent vector field. In other words, a Finsler metric admitting a semi-concurrent vector field is necessarily either Riemannian or conic Finslerian.


\section{\textbf{Notations and preliminaries}}
Let  $(M,F)$ be  an n-dimensional smooth connected Finsler
manifold; $F$ being the Finsler function (Finsler metric or Lagrangian). Let $(x^i)$ be
the coordinates of any  point of the base manifold $M$  and $(y^i)$ a
supporting element at the same point.  We use the following terminology and notations:\\
   $\partial_i$: partial differentiation with respect to $x^i$,\\
   $\dot{\partial}_i$:  partial differentiation
    with respect to  $y^i$ (basis vector fields of the vertical bundle),\\
   $g_{ij}:=\frac{1}{2}\dot{\partial}_i\dot{\partial}_j F^2=\dot{\partial}_i
   \dot{\partial}_jE$:  the
   Finsler metric tensor, where $E:=\frac{1}{2}F^2$ is the energy function,\\
   $l_i:=\dot{\partial}_iF=g_{ij} l^j=g_{ij}\frac{y^j}{F}$: the
    normalized supporting element; $l^i:=\frac{y^i}{F}$,\\
 $l_{ij}:=\dot{\partial}_il_j$,\\
   $h_{ij}:=Fl_{ij}=g_{ij}-l_il_j$:  the angular metric tensor,\\
   \vspace{7pt}$C_{ijk}:=\frac{1}{2}\dot{\partial}_kg_{ij}=\frac{1}{4}\dot{\partial}_i
    \dot{\partial}_j\dot{\partial}_k F^2$:  the Cartan  tensor, \\
      $C^i_{jk}:=g^{ri}C_{rjk}=\frac{1}{2}g^{ri}\dot{\partial}_kg_{rj}$:
     the h(hv)-torsion tensor,\\
   $G^i$: the components of the geodesic spray associated
    with $(M,F)$,\\
    $ N^i_j:=\dot{\partial}_jG^i$: the Barthel (or Cartan nonlinear) connection
    associated with $(M,F)$,\\
    $\delta_i:=\partial_i-N^r_i\dot{\partial}_r$: the basis
     vector fields of the horizontal bundle,\\
     \vspace{7pt}$G^i_{jh}:=\dot{\partial}_hN^i_j=\dot{\partial}_h\dot{\partial}_jG^i$:
     the coefficients of Berwald connection, \\
        \vspace{.3cm}$\Gamma^i_{jk}:=\frac{1}{2}g^{ir}(\delta_jg_{kr}+\delta_kg_{jr}-\delta_rg_{jk})$:
          the Christoffel symbols with respect to $\delta_i$,\\
       $(\Gamma^i_{jk},N^i_j,C^i_{jk})$: The Cartan connection
        $C\Gamma$.

\vspace{5pt}
    For the Cartan  connection $(\Gamma^i_{jk},
         N^i_j,C^i_{jk})$, we define\\
         $X^i_{j\mid k}:=\delta_kX^i_j+X^m_j\Gamma^i_{mk}-X^i_m\Gamma^m_{jk}$: the
    horizontal covariant derivative of $X^i_j$,\\
   $ X^i_j|_k:=\dot{\partial}_kX^i_j+X^m_jC^i_{mk}-X^i_mC^m_{jk}$: the
    vertical  covariant derivative of $X^i_j$.

\vspace{7pt}
Now, we give the definition we shall adopt for a Finsler manifold.

\begin{definition}\label{fin.struc.} A Finsler structure on a manifold $M$  is a   function
$$F:TM\rightarrow \mathbb{R}$$
with the following properties:
\begin{description}
   \item[(a)] $F\geqslant 0$ and $F(0)=0$.

    \item[(b)] $F$ is $C^\infty$ on the slit tangent
    bundle  $\tm:=TM\backslash\{0\}$.

    \item[(c)] $F(x,y)$ is positively homogenous of degree one in $y$: $F(x,\lambda y) = \lambda F(x,y)$
     for all $y \in TM$ and $\lambda > 0$.

    \item[(d)] The Hessian matrix ${g_{ij}(x,y):=\dot{\partial}_i\dot{\partial}_j E}$
is positive-definite at each point of $\tm$, where $E:=\frac{1}{2}F^2$ is the energy function of the Lagrangian $F$.
\end{description}
The pair $(M,F)$ is called  a {Finsler manifold} and the symmetric bilinear form $g=g_{ij}(x,y)dx^i\otimes dx^j$ is called the
Finsler metric  tensor  of the Finsler manifold $(M,F)$. \\
Sometimes, a function $F$ satisfying the above conditions is said to be a regular Finsler metric.
\\[0.2cm]
$\bullet$ When the metric tensor $g$ is non-degenerate at each point of $\tm$, the pair $(M,F)$ is called a pseudo-Finsler manifold.
\\[0.2cm]
$\bullet$ When $F$ satisfies the conditions (a)-(d) but only on an open conic subset $A$ of $TM$ (for every $v \in A$ and $\mu > 0, \mu v\in  A $), the pair $(A,F)$ is called  conic Finsler manifold.\\
If, moreover, the metric tensor $g$ is non-degenerate at each point of $A$, the pair $(A,F)$ is called  conic pseudo-Finsler manifold.
\end{definition}

For more details about conic Finsler and conic pseudo-Finsler metrics we refer, for example, to \cite{conic}.

\vspace{5pt}
In the following, we give the definitions of the special Finsler manifolds we shall use in the sequel.

\begin{definition}
A Finsler manifold $(M,F)$ is said to be Berwaldian if the Berwald tensor $G^{h}_{ijk}:=\dot{\partial}_i G^{h}_{jk}=\dot{\partial}_i\dot{\partial}_j\dot{\partial}_kG^h$ vanishes.
\end{definition}
It is to be noted that \cite{r2.22},
\begin{equation*}\label{Landsberg}
    G^{h}_{ijk}=0 \Longleftrightarrow   C^{h}_{ij{|} k } =0 \Longleftrightarrow G^{h}(x,\,y) \text{ is quadratic in}\, y \in T_{x}M.
\end{equation*}
\begin{definition}
A Finsler manifold $(M,F)$  is called Landsbergian if the Landsberg tensor $L_{ijk}:=\frac{1}{2}F\ell_hG^h_{ijk}$ vanishes.
\end{definition}
It is to be noted that \cite{r2.22},
\begin{equation*}\label{Landsberg}
    L_{ijk}=0 \Longleftrightarrow  y^{i} \, C^{h}_{jk{|} i } =0.
\end{equation*}

 \begin{definition} \em{\cite{C_2-like}}
A Finsler manifold $(M,L)$ of dimension $n\geq 2$ is said to be
$C_2$-like  if the Cartan  tensor $C_{ijk}$ satisfies
\begin{equation*}\label{c2-likedef.}
C_{ijk}=\frac{1}{C^2}C_iC_jC_k,
\end{equation*}
where $C_i:=C_{ijk}g^{jk}$ and $C^2:=C_iC^i$.
\end{definition}
\begin{definition} \em{\cite{r2.9}}
A Finsler manifold $(M,F)$ of dimension $n\geq 3$ is called C-reducible
if the  Cartan tensor $C_{ijk}$ has the form:
\begin{equation}\label{c-reducible*}
C_{ijk}=\frac{1}{n+1}(h_{ij}\,C_k+h_{ki}\,C_j+h_{jk}\,C_i).
\end{equation}
\end{definition}

\begin{definition} \em{\cite{r2.21}}\label{semi-1}
A Finsler manifold $(M,L)$ of dimension $n\geq 3$ is called
semi-C-reducible  if the Cartan tensor $C_{ijk}$ is written in the
form:
\begin{equation}\label{semi-c-red.}
C_{ijk}=\frac{r}{n+1}(h_{ij}C_k+h_{ki}C_j+h_{jk}C_i)+\frac{t}{C^2}C_iC_jC_k,
\end{equation}
 where $r$ and $t$ are scalar functions such that $r+t=1$.
\end{definition}


\section{{Semi-concurrent vector fields}}
Let $(M, F)$ be an $n$-dimensional smooth Finsler manifold.
\begin{definition}\em{\cite{Tachibana}} A vector field $X^i(x)$ on $M$ is said to be concurrent with respect to Cartan connection  if it satisfies
\begin{equation}\label{CVF}
X^{h}(x)\,C_{hij}=0, \quad {X^i}_{|_j}=-\delta^i_j.
\end{equation}
The condition \eqref{CVF} will be called C-condition.
\end{definition}

\begin{definition}\em{\cite{hashiguchi}}
The manifold $M$ fulfils the C-conformal condition if there exists on $M$ a conformal transformation $\overline{F}=e^{\sigma(x)} F$ such that
\begin{equation}\label{CC}
 \sigma_{h}(x)\,C^h_{ij}=0,
\end{equation}
where $\sigma_h:=\frac{\partial \sigma}{\partial x^h}$.
The condition \eqref{CC} will be called CC-condition.
\end{definition}
\begin{definition}\em{\cite{DWF}}
Assume that there exists  a non-constant smooth function $f$ on $M$ such that
\begin{equation}\label{FC}
    f_i(x)\frac{\partial g^{ij}}{\partial y^k}=0,
\end{equation}
where $f_i:=\frac{\partial f}{\partial x^i}$.
The condition \eqref{FC} will be called F-condition.
\end{definition}
\begin{definition}
 A vector field $B^i(x)$ on M is said to be semi-concurrent if it satisfies
\begin{equation}\label{SC}
 B^h(x)\,C_{hij}=0.
\end{equation}
The condition \eqref{SC} will be called the SC-condition.
\end{definition}

\begin{lemma}\label{FC-to-SC}
If a Finsler manifold satisfies the F-condition \eqref{FC}, then it satisfies the SC-condition \eqref{SC}.
\end{lemma}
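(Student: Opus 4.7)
The plan is to produce the required semi-concurrent vector field $B^h(x)$ explicitly out of the function $f$ by raising an index, and then read the SC-condition directly from the F-condition.

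First I would set
\[
B^j(x,y) := g^{ij}(x,y)\, f_i(x).
\]
The first observation is that the F-condition forces this quantity to be independent of $y$: since $f_i$ depends only on $x$, one has $\dot\partial_k B^j = f_i\,\dot\partial_k g^{ij} = 0$, so $B^j = B^j(x)$ is a legitimate vector field on $M$. This is the candidate semi-concurrent vector field.

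Next I would convert $\dot\partial_k g^{ij}$ into Cartan-tensor language. Differentiating the identity $g^{ij}g_{jk} = \delta^i_k$ with respect to $y^\ell$ and using $C_{abk} = \tfrac{1}{2}\dot\partial_k g_{ab}$ gives the standard formula
\[
\dot\partial_k g^{ij} = -2\, g^{ia} g^{jb} C_{abk}.
\]
Substituting this into the F-condition rewrites it as
\[
0 = f_i\,\dot\partial_k g^{ij} = -2\, g^{jb}\bigl(B^a C_{abk}\bigr).
\]
For each fixed $k$ the quantity $B^a C_{abk}$ is a covector in $b$ which, after multiplication by the invertible matrix $g^{jb}$, vanishes identically in $j$; hence $B^a C_{abk} = 0$. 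Using the total symmetry of $C_{ijk}$ this reads $B^h(x)\,C_{hij} = 0$, which is exactly the SC-condition for the vector field $B^h$.

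There is essentially no obstacle beyond choosing the correct raising of the index and recalling the formula for $\dot\partial_k g^{ij}$; the only point that deserves a line of comment in the write-up is verifying that $B^j$ is genuinely $x$-dependent only, because that is precisely where the F-condition is used twice—once to legitimize the definition of the vector field, and once (through the same identity) to produce its annihilation of the Cartan tensor.
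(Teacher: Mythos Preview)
Your argument is correct and follows essentially the same route as the paper: both define the candidate vector field by $B^j=g^{ij}f_i$, use the F-condition to see it depends on $x$ only, and then extract $B^hC_{hij}=0$. The only cosmetic difference is that the paper lowers the index back to $f_i=b^jg_{ij}$ and differentiates that relation, whereas you substitute the standard formula $\dot\partial_k g^{ij}=-2g^{ia}g^{jb}C_{abk}$ directly into the F-condition; the two computations are equivalent.
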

\begin{proof}
Assume that $(M,F)$ satisfies \eqref{FC} so that
$$f_i(x) g^{ij}=b^j(x),$$
where $b^j$ are smooth functions on $M$. From which,
$$f_i=b^jg_{ij}.$$
Differentiating the above relation with respect to $y^k$, we get
$$ b^i(x)\,C_{ijk}=0.$$
This means that $(M,F)$ satisfies the SC-condition \eqref{SC}.
\end{proof}

\begin{remark}
\em{
The converse of the above result is not true in general. In fact, if $(M, F)$ satisfies the SC-condition, then, by \eqref{SC}, $\frac{\partial}{\partial y^k} (B^i\,g_{ij})=0$. Then, by integration, $B^i\,g_{ij}=\lambda_j(x)$ and  $B^i=\lambda_j(x)\,g^{ij}$. Now, by differentiation both sides with respect to $y^k$, we find that $\lambda_j(x)\,C^{ij}_k=0$. Therefore, the F-condition \eqref{FC} is satisfied only in the case when $\lambda_j(x)$ represents the gradient of a non-constant function $f\in C^\infty(M)$. This shows in particular that the SC-condition  is weaker than the F-condition.
}
\end{remark}
In view of Lemma \ref{FC-to-SC}, one can observe that the above mentioned conditions \eqref{CVF}-\eqref{SC} are interrelated as follows:
$$CC\text{-condition}\Longrightarrow F\text{-condition} \Longrightarrow SC\text{-condition},$$
$$ C\text{-condition} \Longrightarrow SC\text{-condition}.$$

Consequently, the $SC$-condition is the weakest condition and hence the most general one.
In the following we shall be concerned mainly with the $SC$-case: $B^{i}\, C_{ijk}=0.$ In fact, if a problem is solved in the $SC$-case, it would be also solved in the $CC$-, $F$- and $C$-cases. Moreover, the non-existence of a semi-concurrent vector field (the $SC$-condition is not satisfied) implies the non-existence of concurrent vector fields and the non-satisfaction of both the $CC$-condition and  the $F$-condition.
\begin{property}
In the F-case, the functions $f^i(x)$ defined by $f^i(x):=f_{k}g^{ik}$ are functions of $x$ only. Indeed,
$$\frac{\partial f^i}{\partial y^j}=\frac{\partial }{\partial y^j}(f_k(x)g^{ik})=f_k(x)\frac{\partial g^{ik}}{\partial y^j}=0.$$
\end{property}
 So when we lower (or raise) the index of $f^i$ (or $f_i$) the result is always functions of $x$ only. one can easily show that the same property is valid for the other three conditions .

\begin{lemma}\label{eguchi} For the  nonzero functions $B^i$ satisfying \eqref{SC}, if the  scalars $\alpha$ and
 $\alpha'$ satisfy
 \begin{equation}\label{independent lemma}  \alpha\, B^i+\alpha' y^i=0, \end{equation}
 then $\alpha=\alpha'=0$, which means that the two vector fields $B^{i}(x)$ and $y^{i}$ are independent.
 \end{lemma}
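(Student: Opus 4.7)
The key observation is that the $SC$-condition forces the covector $B_i := g_{ij}B^j$ to depend only on $x$. Indeed, differentiating in $y^k$ gives $\dot\partial_k B_i = (\dot\partial_k g_{ij})B^j = 2\,C_{ijk}B^j = 0$ by the $SC$-condition \eqref{SC}. This is the ``function-of-$x$-only'' behavior already observed in the Property following the Remark, and it will let me contract the relation \eqref{independent lemma} with $B_i$ cleanly.

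The plan is then to turn \eqref{independent lemma} into a $2\times 2$ linear system in $(\alpha,\alpha')$ and show that its determinant cannot vanish. First I would contract \eqref{independent lemma} once with $B_i$ and once with $y_i = g_{ij}y^j$, using $y^i y_i = F^2$ and setting $B^2 := B^iB_i$ and $(B\cdot y) := B^i y_i = B_i y^i$. This produces
\begin{equation*}
\alpha\, B^2 + \alpha'\,(B\cdot y) = 0, \qquad \alpha\,(B\cdot y) + \alpha'\,F^2 = 0.
\end{equation*}
Eliminating $\alpha'$ (or $\alpha$) gives $\alpha\bigl(B^2 F^2 - (B\cdot y)^2\bigr) = 0$ and likewise for $\alpha'$.

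The main step is now to verify that the determinant $B^2 F^2 - (B\cdot y)^2$ is strictly positive. This is exactly the Cauchy--Schwarz inequality for the positive-definite metric tensor $g_{ij}(x,y)$ on $T_xM$, applied to the two tangent vectors $B^i(x)$ and $y^i$; equality holds if and only if $B^i$ and $y^i$ are proportional at the point $(x,y)$. Since $B^i$ is a nonzero function of $x$ only while $y^i$ ranges freely over the fiber, for $y$ not aligned with $B(x)$ the inequality is strict and we conclude $\alpha = \alpha' = 0$ at such points; by continuity (both $\alpha$ and $\alpha'$ are smooth and the exceptional locus $\{y \parallel B(x)\}$ has empty interior in $\widetilde{TM}$), these functions must vanish identically, yielding the asserted linear independence of $B^i(x)$ and $y^i$.

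The only potentially delicate point is the handling of the locus where $y$ is parallel to $B(x)$, but this is disposed of precisely because $B^i$ is $y$-independent, so this locus is a proper submanifold of $\widetilde{TM}$ and cannot support a nonzero smooth $\alpha$ or $\alpha'$. Everything else in the proof is purely algebraic, driven by the single analytic input $\dot\partial_k B_i = 0$ coming from \eqref{SC}.
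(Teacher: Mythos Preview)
Your approach is correct and begins exactly as the paper does: contract \eqref{independent lemma} with $B_i$ and with $y_i$ to obtain the same $2\times 2$ homogeneous system in $(\alpha,\alpha')$. The difference lies only in how the determinant $B^2F^2-(B\cdot y)^2$ is shown to be nonzero. You invoke the Cauchy--Schwarz inequality for the positive-definite form $g_{ij}$ (strict away from the thin locus $\{y\parallel B(x)\}$) and then close with an explicit density-plus-continuity argument; the paper instead supposes the determinant vanishes identically, differentiates twice in $y$, and reaches $\tfrac{B_0}{F^2}\,h_{ij}=0$, which is impossible since the angular metric is nonzero. Your route is more elementary and makes the continuity step explicit (the paper leaves the same step implicit). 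The paper's route has two minor advantages: it does not rely on positive-definiteness of $g$, and it also yields $B_0:=B_iy^i\neq 0$ along the way, recorded as \eqref{lemmab} and invoked repeatedly downstream. If you keep your argument, you should still note $B_0\not\equiv 0$ separately, since later proofs in the paper depend on it.
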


\begin{proof}
Contraction of $(\ref{independent lemma})$ by  $y_i$ and $B_i$, respectively, gives rise to the
 system:
$$B_0 \alpha+F^2\alpha'=0,$$
$$B^2\alpha+B_0 \alpha'=0,$$
where $B_0:=B_i\,y^i=B^i\,y_i$ and $B^2=B_i\,B^i$. We regard this system as a system of algebraic equations in the unknowns $\alpha$ and $\alpha'$.
Let us show first that $B_0$ and $B^2\,F^2 - B^2_0$ are nonzero.

Seeking for a contradiction, assume that $B_0=0$, so $B_i\,y^i=0$. Differentiation with respect to $y^j$ gives $B_j=0$. Since $M$ is connected, it follows that $B(x)$ is a constant function on $M$, which is a contradiction.

Now, assume that $B^2\,F^2\,-B^2_0=0$, then we have $B^2-\frac{B^2_0}{F^2}=0$. Differentiation both sides with respect to $y^i$, we get
\begin{equation*}
0=-\frac{1}{F^2}\left(2B_{0} \,\frac{\partial B_{0}}{\partial y^i}\right) + \frac{2}{F^3}\,B_0^{2}\,\frac{\partial F}{\partial y^i}  = -\frac{2}{F^2} B_{0} \,B_{i} + \frac{2}{F^3}\,B_0^{2}\,l_{i},\,\,  \text{  or  } \,\, B_i=\frac{B_0}{F}\,l_i.
\end{equation*}
Again, differentiating $B_i=\frac{B_0}{F}\,l_i$ with respect to $y^j$, we obtain
$$\frac{B_{j}}{F}\,l_{i} -\frac{B_{0}}{F^2}\,l_{i}\,l_{j} +\frac{B_{0}}{F^2}\,h_{ij}  =0,$$
and using $B_i=\frac{B_0}{F}l_i$, we get $\frac{B_0}{f^2}\,h_{ij}=0$, which is a contradiction. Hence, we have
\begin{equation}\label{lemmab}
  B_{0} \neq 0, \,\, B^2\,F^2 - B^2_0 \neq 0.
\end{equation}

Finally, since  $B^2F^2-B^2_0\neq 0$, the  above system has only the trivial solution; that is,  $\alpha=\alpha'=0$.
\end{proof}

\begin{theorem}\label{matsumoto}
 Let $(M,F)$ be a Finsler manifold. In each of the following cases
\begin{description}
    \item[(a)] $(M,F)$ is two-dimensional,

    \item[(b)] $(M,F)$ is three-dimensional such that $F(x,-y)=F(x,y)$,

    \item[(c)] $(M,F)$ is C-reducible,

    \item[(d)] $(M,F)$ is Berwaldian with $\det{({B^i}_{| j})}\neq 0$,
    \end{description}
if the Finsler manifold $(M,F)$ satisfies the SC-condition $(\ref{SC})$, then
it is Riemannian.
\end{theorem}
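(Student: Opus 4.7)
The plan is to handle the four cases separately. In each, the strategy is to substitute the structural hypothesis on $(M,F)$ into the SC-condition \eqref{SC} and then invoke Lemma \ref{eguchi} --- which forbids $B^i \parallel y^i$ --- to eliminate the non-Riemannian alternative and force $C_{ijk}=0$.

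\emph{Case (c) (C-reducible).} Substitute \eqref{c-reducible*} into \eqref{SC}. With $B_i^\perp := B_i - (B_0/F)\,l_i$ denoting the projection of $B_i$ on the angular subspace, the result is
\[
(B^h C_h)\,h_{ij} + B_i^\perp C_j + B_j^\perp C_i = 0.
\]
Contracting with $g^{ij}$ gives $(n+1)B^h C_h = 0$; substituting back and contracting with $C^i$ (using $B_i^\perp C^i = B^h C_h = 0$) yields $C^2 B_j^\perp = 0$. Since $B_j^\perp = 0$ would force $B^i \parallel y^i$ in contradiction with Lemma \ref{eguchi}, we conclude $C^2 = 0$, hence $C_i = 0$; C-reducibility then gives $C_{ijk} = 0$.

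\emph{Case (a) ($\dim M = 2$).} In the Berwald frame $(l,m)$ one has $h_{ij} = m_i m_j$ and $C_{ijk} = (I/F)\,m_i m_j m_k$ for the main scalar $I$. The SC-condition becomes $(I/F)(B^h m_h)\,m_i m_j = 0$. If $I = 0$ we are done; otherwise $B^h m_h = 0$, and expanding $B^i = a\,l^i + b\,m^i$ yields $b=0$, i.e.\ $B^i \parallel y^i$, again contradicting Lemma \ref{eguchi}.

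\emph{Case (d) (Berwald with $\det(B^h{}_{\mid k})\ne 0$).} In a Berwald manifold, $C_{hij\mid k} = 0$ by the equivalence recorded after Definition~2.2. Horizontal covariant differentiation of \eqref{SC} then yields $B^h{}_{\mid k}\,C_{hij} = 0$; invertibility of the matrix $(B^h{}_{\mid k})$ in the $(h,k)$ indices immediately forces $C_{hij} = 0$.

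\emph{Case (b) ($\dim M = 3$, reversible).} Work in an orthonormal Berwald frame $(l,m,n)$ with $h_{ij} = m_i m_j + n_i n_j$ and decompose
\[
C_{ijk} = H_{30}\,m_i m_j m_k + 3 H_{21}\,m_{(i} m_j n_{k)} + 3 H_{12}\,m_{(i} n_j n_{k)} + H_{03}\,n_i n_j n_k.
\]
Writing $B^i = b_0 l^i + b_1 m^i + b_2 n^i$ with $(b_1,b_2) \ne (0,0)$ (Lemma \ref{eguchi}), the SC-condition decomposes in the basis $\{m_i m_j,\; m_i n_j + n_i m_j,\; n_i n_j\}$ into the linear system
\[
b_1 H_{30} + b_2 H_{21} = 0,\quad b_1 H_{21} + b_2 H_{12} = 0,\quad b_1 H_{12} + b_2 H_{03} = 0.
\]
Choosing the gauge in which $m^i$ is aligned with the horizontal projection of $B^i$ (so $b_2 = 0$) kills $H_{30}, H_{21}, H_{12}$ and reduces the Cartan tensor to $C_{ijk} = H_{03}\,n_i n_j n_k$ with $n\perp y$, $n \perp B$, $|n|_g = 1$ --- equivalently, a $C_2$-like form with $C_i = H_{03} n_i$. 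The reversibility hypothesis $F(x,-y) = F(x,y)$ is then exploited to eliminate $H_{03}$: since $n^i$ is essentially the $g$-cross product of $y^i$ with the $y$-independent $B^i$, it flips sign under $y \mapsto -y$, making $n_i n_j n_k$ odd in $y$, while $C_{ijk}$ itself is odd; combining the parity, the homogeneity $C_{ijk}(x,\lambda y) = \lambda^{-1} C_{ijk}(x,y)$ for $\lambda>0$, and consistency with successive vertical covariant derivatives of \eqref{SC} forces $H_{03} = 0$. Hence $C_{ijk} = 0$ and $(M,F)$ is Riemannian.

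\emph{Main obstacle.} Cases (a), (c), (d) are straightforward algebraic consequences of \eqref{SC} and Lemma \ref{eguchi}. The genuinely delicate one is (b): the SC-condition in dimension three leaves a single main scalar $H_{03}$ undetermined after the system is diagonalised, and one needs reversibility in a nontrivial way --- beyond the raw parity of $C_{ijk}$ --- to close the argument. Finding the right identity (e.g.\ a Matsumoto-type $C_2$-like obstruction compatible with reversibility) is where the real work lies.
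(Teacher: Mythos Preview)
Your treatments of (a) and (d) are essentially the paper's own arguments: the Berwald-frame reduction in dimension~2 and the horizontal differentiation of \eqref{SC} in the Berwald case appear verbatim in the paper, up to notation.

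For (c) your argument is correct but takes a longer path than the paper. You contract \eqref{c-reducible*} once with $B^h$, trace to kill $B^hC_h$, then contract with $C^i$ to isolate $C^2\,B_j^\perp$. The paper instead contracts \eqref{c-reducible*} directly with $B^jB^k$, obtaining $(B^iB^jh_{ij})\,C_k=0$ in one step; since $B^iB^jh_{ij}=B^2-B_0^2/F^2\neq 0$ by \eqref{lemmab}, this gives $C_k=0$ immediately. Your route works, but the double contraction is shorter and avoids the intermediate trace.

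For (b) there is a genuine gap in your sketch. Your parity analysis shows that, with $n^i$ built as the $g$-cross product of $l^i$ and the even vector $m^i$, both $n_in_jn_k$ and $C_{ijk}$ are \emph{odd} under $y\mapsto -y$; hence the residual scalar $H_{03}$ is \emph{even}, and no contradiction arises from parity and positive homogeneity alone. The appeal to ``successive vertical covariant derivatives of \eqref{SC}'' is not an argument---differentiating $B^hC_{hij}=0$ vertically still hits $B^h$ in the dangerous slot and cannot by itself touch the $n$-$n$-$n$ component. You are right that this is where the work lies; the paper does not supply it either, referring instead to the Matsumoto--Eguchi argument \cite{meguchi} for concurrent vector fields and claiming it carries over via Lemma~\ref{eguchi}. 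So your identification of the obstacle is accurate, but what you have written for (b) is a description of the difficulty rather than a proof, and it does not go beyond what the paper offers.
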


\begin{proof}~\par
\noindent\textbf{(a)} The Cartan tensor $C_{ijk}$ of a two-dimensional Finsler manifold is given by
$$FC_{ijk}=J\eta_i\eta_j\eta_k,$$
where $\eta_i$ is an orthogonal vector to $y^i$ and $J$ is the Berwald main scalar \cite{Berwald41}. Contracting by $B^i$, we have
$$J \,B^i\eta_i\eta_j\eta_k=0,$$
If $B^i\,\eta_i=0$, then this leads to $B^i=\mu y^i$, which contradicts Lemma
\ref{eguchi}. Hence, $J=0$  and so $C_{ijk}=0$.

\vspace{6pt}
\noindent\textbf{(b)} Making use of  Lemma \ref{eguchi}, the proof can be carried out in a similar manner as in \cite{meguchi} for concurrent vector fields.

\vspace{6pt}
\noindent\textbf{(c)} As $(M, F)$ is C-reducible, then by \eqref{c-reducible*} we have
$$C_{ijk}=h_{ij}C_k+h_{ki}C_j+h_{jk}C_i.$$
Contracting the above equation by $B^j B^k$,  we get $B^i\,B^j\,h_{ij}C_k=0.$ We have the following implication
\begin{eqnarray*}
 B^iB^jh_{ij}C_k=0&\Longrightarrow& B^iB^j(g_{ij}-l_il_j)C_k=0\\
 &\Longrightarrow& (B^2-B_0^2/F^2)C_k=0\\
 &\Longrightarrow& (B^2F^2-B_0^2)C_k=0\\
  &\Longrightarrow&  C_k=0, \text{ in view of } \eqref{lemmab}.
\end{eqnarray*}

\noindent\textbf{(d)} As $(M,F)$ is Berwaldian, $C_{ijk \mid h}=0$ and as $\det{({B^i}_{|j})}\neq 0$, then ${B^i}_{|j}\,Z^i_{k}= \delta^{i}_k$, where $(Z^i_{k})$ denotes the inverse of $({B^i}_{|j})$. Now, by the SC-condition, $B^i\,C_{ijk}=0$. Then we have the following implications:
\begin{eqnarray*}
 B^i\,C_{ijk}=0&\Longrightarrow& (B^i\,C_{ijk})_{\mid h}=0\\
 &\Longrightarrow& {B^i}_{\mid h}\,C_{ijk}=0\\
 &\Longrightarrow&  Z^h_r\, {B^i}_{\mid h}\,C_{ijk}=0\\
  &\Longrightarrow& \,C_{rjk}=0.
\end{eqnarray*}
This completes the proof.
\end{proof}

\begin{corollary}
If $B^i$ is a concurrent vector field, then (d) is true without any condition, since in this case ${B^i}_{|j}=-\delta^i_j$. Moreover, the same result is also true for the Landsbergian case. This retrieves some results of Matsumoto \em{\cite{meguchi}}.
\end{corollary}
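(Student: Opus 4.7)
The corollary breaks into two claims, which I treat in turn.

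\textbf{Concurrent case of~(d).} For any concurrent vector field $B^i$, the defining relation ${B^i}_{|j}=-\delta^i_j$ already exhibits $({B^i}_{|j})=-I_n$ as invertible, with inverse $Z^i_k=-\delta^i_k$. Hence the non-degeneracy hypothesis $\det({B^i}_{|j})\neq 0$ appearing in case~(d) of Theorem~\ref{matsumoto} is redundant, and the conclusion that $(M,F)$ is Riemannian follows at once from that case applied to the concurrent $B^i$.

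\textbf{Landsberg case.} Here the plan is to refine the short computation of case~(d). Differentiating the SC-condition $B^h C_{hij}=0$ horizontally and substituting ${B^h}_{|k}=-\delta^h_k$ yields the key identity
\[
C_{kij}=B^h\,C_{hij|k}. \qquad (\ast)
\]
Under the Berwald assumption used in (d) the right-hand side of $(\ast)$ vanishes outright because $C_{hij|k}=0$; under the weaker Landsberg assumption only $y^k C_{hij|k}=L_{hij}=0$ is available, so one must dig deeper. Taking a further $(\cdot)_{|l}$-derivative of $(\ast)$ and again using ${B^h}_{|l}=-\delta^h_l$ produces
\[
C_{kij|l}+C_{lij|k}=B^h\,C_{hij|k|l},
\]
and contracting with $y^l$ collapses the left-hand side: $y^l C_{kij|l}=L_{kij}=0$ by Landsberg, while $y^l C_{lij|k}=0$ on any Finsler manifold because $y^l C_{lij}=0$ and ${y^l}_{|k}=0$ in the Cartan connection. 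One is thus left with the single identity $B^h\,y^l\,C_{hij|k|l}=0$.

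The last step---and the main obstacle---is to deduce from this identity that $B^h C_{hij|k}=0$, which by $(\ast)$ forces $C_{kij}=0$ and hence the Riemannian conclusion. The intended route is to commute the two horizontal derivatives in $C_{hij|k|l}$ via the Ricci identity of the Cartan connection, and then to exploit the concurrent property (which rigidly controls how $B^i$ interacts with the $h$- and $hv$-curvature tensors) together with the Landsberg hypothesis once more, so as to kill the resulting curvature terms. This curvature book-keeping is precisely the classical Matsumoto--Eguchi calculation \cite{meguchi} for the Landsbergian case, which the phrasing of the corollary (``This retrieves some results of Matsumoto'') is openly appealing to.
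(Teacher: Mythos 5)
Your first paragraph is exactly the paper's (one-line) justification: for a concurrent field $({B^i}_{|j})=(-\delta^i_j)$ is automatically invertible, so the determinant hypothesis in part (d) of Theorem \ref{matsumoto} is vacuous and the Berwald conclusion follows. This part is correct and coincides with the paper.

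\textbf{Landsberg case.} Here your argument does not close, and the way it fails is worth naming precisely. Your identity $(\ast)$, namely $C_{kij}=B^h C_{hij|k}$, is correct, but after the second differentiation the contraction with $y^l$ produces an identity whose left-hand side vanishes for reasons that are either automatic ($y^l C_{lij|k}=0$ always) or equivalent to the Landsberg hypothesis ($y^l C_{kij|l}=0$); what remains, $B^h y^l C_{hij|k|l}=0$, is therefore a \emph{consequence} of your assumptions carrying no new information, and there is no way to ``invert'' it to obtain $B^h C_{hij|k}=0$. Indeed, by $(\ast)$ the statement $B^h C_{hij|k}=0$ is literally equivalent to $C_{kij}=0$, i.e.\ to the full conclusion, so the step you defer is the entire theorem. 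The ingredient actually needed (and this is the Matsumoto--Eguchi computation) is the Ricci identity of the Cartan connection applied to the concurrent field: since $B^i{}_{|j}=-\delta^i_j$ and $B^i|_j=0$, commuting horizontal and vertical derivatives yields $B^h P_h{}^i{}_{jk}=-C^i_{jk}$, while in a Landsberg space the $hv$-curvature reduces to $P_{hijk}=C_{ijk|h}-C_{hjk|i}$; combining these with $(\ast)$ is what kills the Cartan tensor. To be fair, the paper itself supplies no proof of the Landsberg half either --- it simply cites \cite{meguchi} --- so your write-up is at the same level of completeness as the source; but as a self-contained proof it has a genuine gap, and the ``refinement'' of case (d) you propose does not by itself bridge it.
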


\begin{remark}
\em{
It is to be noted that part \textbf{(a)} of the above theorem generalizes the main result of \cite{DWF}. The last is retrieved from  \textbf{(a)} by letting $B^i$ be a gradient of a non-constant function on M.
}
\end{remark}

\begin{theorem}\label{C-change7}
 A  semi-C-reducible Finsler manifold $(M,F)$ satisfying the SC-condition $(\ref{SC})$
  is either Riemannian or C2-like.
\end{theorem}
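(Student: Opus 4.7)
The strategy is to substitute the semi-C-reducible form of Definition \ref{semi-1} into the SC-condition $B^iC_{ijk}=0$ and then squeeze out the desired dichotomy by two successive contractions: one with $g^{jk}$ to kill the scalar $B^iC_i$, and a second with $C^k$ to produce the Riemannian alternative.

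First I would introduce the auxiliary quantities $\tilde B_j := B^ih_{ij}=B_j-(B_0/F)\,l_j$ and $\rho:=B^iC_i$, so that contracting \eqref{semi-c-red.} with $B^i$ yields the identity
\begin{equation*}
 \frac{r}{n+1}\bigl(\tilde B_j\,C_k+\tilde B_k\,C_j+\rho\,h_{jk}\bigr)+\frac{t\,\rho}{C^2}\,C_j\,C_k=0.
\end{equation*}
A small but crucial bookkeeping identity I will need is $\tilde B^kC_k=\rho$, which holds because $l^kC_k=y^kC_k/F=0$. Next I would contract the displayed identity with $g^{jk}$: using $h_{jk}g^{jk}=n-1$, $C_jC_kg^{jk}=C^2$ and the identity just noted, both groups of terms telescope and the left-hand side collapses to $(r+t)\rho$. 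Since by Definition \ref{semi-1} we have $r+t=1$, this immediately gives $\rho=B^iC_i=0$. Feeding $\rho=0$ back into the displayed identity reduces it to
\begin{equation*}
 \frac{r}{n+1}\bigl(\tilde B_j\,C_k+\tilde B_k\,C_j\bigr)=0.
\end{equation*}

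From here I would branch on $r$. If $r=0$, then $t=1$, and \eqref{semi-c-red.} degenerates to $C_{ijk}=\frac{1}{C^2}C_iC_jC_k$, so $(M,F)$ is $C_2$-like. Otherwise $r\neq 0$ forces $\tilde B_jC_k+\tilde B_kC_j=0$; contracting this with $C^k$ and using $\tilde B_kC^k=\rho=0$ yields $C^2\,\tilde B_j=0$. The critical step is then to exclude the possibility $\tilde B_j\equiv 0$: this would force $B_j=(B_0/F)\,l_j$, equivalently $F^2B^i-B_0\,y^i=0$, which is a nontrivial linear dependence of $B^i$ and $y^i$ and therefore contradicts Lemma \ref{eguchi}. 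Hence $C^2=0$, which by positive-definiteness of $g^{ij}$ forces $C_i=0$, and then the semi-C-reducible form itself makes $C_{ijk}=0$, i.e.\ $(M,F)$ is Riemannian.

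The main obstacle is not algebraic but structural: one has to verify that the trace-and-contract procedure really does sweep up the full tensor identity, leaving nothing behind. What makes the argument go through cleanly is twofold. The normalization $r+t=1$ built into Definition \ref{semi-1} is exactly what ensures that the single $g^{jk}$-trace concludes $\rho=0$, and the independence Lemma \ref{eguchi} is exactly what rules out the otherwise delicate degenerate case $\tilde B_j=0$. No estimates or PDE-type reasoning are required, only careful tracking of which contractions annihilate which pieces of the semi-C-reducible decomposition.
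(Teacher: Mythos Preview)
Your argument is correct, but it takes a longer path than the paper's. The paper observes at the outset that $B^iC_i=0$ is immediate from the SC-condition (just trace $B^iC_{ijk}=0$ with $g^{jk}$), so your detour through the $g^{jk}$-trace and the normalization $r+t=1$ to recover $\rho=0$ is unnecessary. More substantially, the paper contracts \eqref{semi-c-red.} \emph{twice} with $B$: since $B^iC_i=0$ kills both the $t$-term and two of the three $h$-terms, one lands directly on $r\,B^iB^jh_{ij}\,C_k=0$, and then \eqref{lemmab} gives $B^iB^jh_{ij}=B^2-B_0^2/F^2\neq 0$, whence $rC_k=0$. This yields $C_k=0$ outright in the $r\neq 0$ branch, bypassing your chain $C^2\tilde B_j=0\Rightarrow C^2=0\Rightarrow C_i=0$. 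Your route works, but note that the final step ``the semi-C-reducible form itself makes $C_{ijk}=0$'' is formally delicate because of the $t/C^2$ factor; you are really invoking Deicke's theorem (as does the paper, implicitly). A cleaner way to finish \emph{your} argument: once you have $\tilde B_jC_k+\tilde B_kC_j=0$, contract with $\tilde B^j$ rather than $C^j$ to obtain $\lvert\tilde B\rvert^2\,C_k=0$ directly, with $\lvert\tilde B\rvert^2=B^2-B_0^2/F^2\neq 0$ again by \eqref{lemmab}. That recovers the paper's conclusion without passing through $C^2$.
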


\begin{proof}
It is to be noted first that the condition $B^i \,C_{ijk}=0$  leads to $B^i\,C_i=0$. Now, contracting  (\ref{semi-c-red.}) by $B^iB^j$ and using the fact that $B^2F^2-B_0^2\neq 0$ \eqref{lemmab}, we get $ rB^iB^jh_{ij}C_k=0,$ hence,\vspace{-7pt}
\begin{eqnarray*}
 rB^iB^jh_{ij}C_k=0&\Longrightarrow& rB^iB^j(g_{ij}-l_il_j)C_k=0\\
 &\Longrightarrow& r(B^2-B_0^2/F^2)C_k=0\\
 &\Longrightarrow& r(B^2F^2-B_0^2)C_k=0\\
  &\Longrightarrow& r C_k=0,
\end{eqnarray*}
then either  $r=0$, which implies that  the space
is $C_2$-like,  or $C_i=0$, which implies that the  space is Riemannian.
\end{proof}

\begin{remark}
\em{
It should be noted that if $g$ is not positive definite, then the condition $C_i=0$ does not necessarily imply that $(M,\,F)$ is Riemannain \cite{deicke}. This can be shown by the following example (where the calculations have been performed using Maple program \cite{NSMaple}).
}
\end{remark}

Take $M=\mathbb{R}^3$, and  $F=f(x)(y_1\,y_2\,y_3)^{\frac{1}{3}}$. The Finsler function $F$ is not defined on the whole $T\mathbb{R}^3$, it is defined on the domain $D= T\mathbb{R}^{3} - \{(x_{i},y_{i}) \in T\mathbb{R}^3\, |\,y_{i}\neq 0 \}$.\\
The components of the metric are:
\begin{eqnarray*}
  g_{11} &=& -\frac{1}{9}\,\frac{f(x)\,(y_2\,y_3)^{2}}{\left(y_1\, y_2\, y_3 \right)^\frac{4}{3}},
             ~~~~g_{12} = \frac{2}{9}\,\frac{f(x)\,y_1\,y_2\, y_3^{2}}{\left(y_1\, y_2\, y_3 \right)^\frac{4}{3}}, \\
  g_{13} &=& \frac{2}{9}\,\frac{f(x)y_1\, y_2^{2}\, y_3}{\left(y_1\, y_2\, y_3 \right)^\frac{4}{3}},~~~~~~~g_{22} =- \frac{1}{9}\,\frac{f(x)\,(y_1\,y_3)^{2}}{\left(y_1\, y_2\, y_3 \right)^\frac{4}{3}},\\
  g_{23} &=&\frac{2}{9}\,\frac{f(x)y_1^{2}\, y_2\, y_3}{\left(y_1\, y_2\, y_3 \right)^\frac{4}{3}},~~~~~~~g_{33} =- \frac{1}{9}\,\frac{f(x)\,(y_1\,y_2)^{2}}{\left(y_1\, y_2\, y_3 \right)^\frac{4}{3}}.
\end{eqnarray*}
Hence, the components of Cartan tensor are:
\begin{eqnarray*}
   C_{111} &=& \frac{2}{27}\,\frac{f(x)\,(y_2\,y_3)^{3}}{\left(y_1\, y_2\, y_3 \right)^\frac{7}{3}}, \ ~~~~~~~~~C_{112} = -\frac{1}{27}\,\frac{f(x)\,y_1\,y_2^{2}\,y_3^{3}}{\left(y_1\, y_2\, y_3 \right)^\frac{7}{3}}, \\
  C_{113} &=& -\frac{1}{27}\,\frac{f(x)\,y_1\,y_2^{3}\,y_3^{2}}{\left(y_1\, y_2\, y_3 \right)^\frac{7}{3}}, \ ~~~~~~~C_{122} = -\frac{1}{27}\,\frac{f(x)\, y_1^{2} y_2\, y_3^{3}}{\left(y_1\, y_2\, y_3 \right)^\frac{7}{3}}, \\
  C_{123} &=& \frac{2}{27}\,\frac{f(x)\,(y_2\,y_3)^{2}\,y_1^{3}}{\left(y_1\, y_2\, y_3 \right)^\frac{7}{3}},~~~~~~~C_{133} = -\frac{1}{27}\,\frac{f(x)\,y_1^{2}\,y_2^{3}\,y_3}{\left(y_1\, y_2\, y_3 \right)^\frac{7}{3}}, \\
  C_{222} &=& \frac{2}{27}\,\frac{f(x)\,(y_1\,y_3)^{3}}{\left(y_1\, y_2\, y_3 \right)^\frac{7}{3}},~~~~~~~~~~~\!C_{223} = -\frac{1}{27}\,\frac{f(x)\,y_1^{3}\,y_2\,y_3^{2}}{\left(y_1\, y_2\, y_3 \right)^\frac{7}{3}}, \\
  C_{233} &=&  -\frac{1}{27}\,\frac{f(x)\,y_1^{3}\,y_2^{2}\,y_3}{\left(y_1\, y_2\, y_3 \right)^\frac{7}{3}},~~~~~~~~~C_{333} = \frac{2}{27}\,\frac{f(x)\,(y_1\,y_2)^{3}}{\left(y_1\, y_2\, y_3 \right)^\frac{7}{3}}.
\end{eqnarray*}
We note that, $C_i=0$, for all $i$, although the space is not Riemannian.

\vspace{5pt}
Moreover, in the above example, although $B^iC_i=0$, we do not have $B^i\,C_{ijk}=0$. For example,
$$B^i\,C_{i11}=\frac{1}{27}\,\frac{f(x)\,\left(2B^{1}\, y_2\,y_3 -B^{2}\, y_1\,y_3 -B^{3}\, y_1\,y_2 \right)}{y_1^{2}\,\left(y_1\, y_2\, y_3 \right)^\frac{1}{3}}\neq0.$$
Therefore,  the above space does not admit a semi-concurrent vector field.

\begin{remark}
\em{
As a by-product, the above example shows the necessity of the condition $F(-y)=F(y)$ in part $(2)$ of Theorem $\ref{matsumoto}$.
}
\end{remark}
The T-tensor is defined by \cite{r2.21}
$$T_{hijk}=F\,C_{hij}{\mid}_k
+C_{hij}\,l_k+C_{hik}\,l_j +C_{hjk}\,l_i+C_{ijk}\,l_h.$$
It is well-known  that if $(M,F)$ is Riemannian, then the T-tensor
vanishes. But the converse is not true in general. The next result
shows that the converse is true in the case where  $(M,F)$
satisfies the SC-condition $(\ref{SC})$.

\begin{theorem}\label{theoremC-change13}
A Finsler manifold satisfying the SC-condition is  Riemannian
if and only if the T-tensor  $T_{hijk}$ vanishes.
\end{theorem}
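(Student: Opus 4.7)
The plan is to leverage the SC-condition $B^{h}C_{hij}=0$ against the hypothesis $T_{hijk}=0$ by contracting with $B^{h}$. The forward direction is trivial: if $(M,F)$ is Riemannian then $C_{hij}=0$, so every summand of $T_{hijk}$ vanishes.

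For the converse, I would contract the identity $T_{hijk}=0$ with $B^{h}$ and collect the surviving pieces. Because $C_{ijk}$ is totally symmetric, the SC-condition yields $B^{h}C_{hij}=B^{h}C_{hik}=B^{h}C_{hjk}=0$, which kills three of the four $l$-terms at once. The remaining $l$-term is $B^{h}l_{h}\,C_{ijk}=\dfrac{B_{0}}{F}C_{ijk}$, so the contracted identity collapses to
\[
F\,B^{h}\,C_{hij}|_{k}+\frac{B_{0}}{F}\,C_{ijk}=0.
\]

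The main (mild) obstacle is then to show that $B^{h}C_{hij}|_{k}=0$. The key observation is that the SC-condition automatically makes $B^{h}$ vertically parallel. Indeed, since $B^{h}=B^{h}(x)$, the definition of vertical covariant derivative gives
\[
B^{h}|_{k}=\dot{\partial}_{k}B^{h}+B^{m}C^{h}_{mk}=g^{hr}B^{m}C_{mrk},
\]
and by the total symmetry of $C_{ijk}$ together with the SC-condition we have $B^{m}C_{mrk}=0$, hence $B^{h}|_{k}=0$. Applying the Leibniz rule to the SC-condition now yields
\[
0=(B^{h}C_{hij})|_{k}=B^{h}|_{k}\,C_{hij}+B^{h}C_{hij}|_{k}=B^{h}C_{hij}|_{k}.
\]

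Substituting back reduces everything to $\dfrac{B_{0}}{F}\,C_{ijk}=0$, and since $B_{0}\neq 0$ by Lemma~\ref{eguchi} and $F>0$, we conclude $C_{ijk}=0$; that is, $(M,F)$ is Riemannian. The whole argument is essentially a single clever contraction; the only nontrivial step is recognizing the hidden vertical parallelism $B^{h}|_{k}=0$, after which the result is immediate.
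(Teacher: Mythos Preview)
Your proof is correct and follows essentially the same route as the paper: both contract the identity $T_{hijk}=0$ with the semi-concurrent field, kill three of the four $l$-terms via the SC-condition, handle the derivative term by first establishing $B^{h}|_{k}=0$ and then applying the Leibniz rule, and conclude from $B_{0}\neq 0$. The only cosmetic difference is that the paper contracts on the index $i$ rather than $h$, which by the total symmetry of $C_{ijk}$ is immaterial.
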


\begin{proof}
We first show that the vertical covariant derivative of $B^i$ vanishes identically. Indeed,
 $$B^i|_k=\dot{\partial}_kB^i+B^m\,C^i_{mk}=\dot{\partial}_kB^i=0,$$
 since $B^i$ are functions of $x$ only.

Let the T-tensor vanish, then
$$F\,C_{hij}|_k +C_{hij}\,l_k+C_{hik}\,l_j +C_{hjk}\,l_i+C_{ijk}\,l_h=0.$$
Contracting by $B^i$, and taking into account that $B^i|_k=0$, we find that
$\frac{B_0}{F}\,C_{hjk}=0$. Since $B_{0} \neq 0$ by \eqref{lemmab}, it follows that $C_{hjk}=0.$
\end{proof}

\vspace{5pt}
Let us write
\begin{equation*}\label{C-change14}
T_{ij}:=T_{ijhk}\,g^{hk}=F\,C_i|_j+l_i\,C_j+l_j\,C_i.
\end{equation*}

We have the following immediate result.
\begin{corollary}\label{C-change13}
 A Finsler manifold satisfying  SC-condition is  Riemannian
   if and only if the  tensor $T_{ij}$  vanishes.
\end{corollary}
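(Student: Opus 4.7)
The plan is to leverage the calculational technique already developed in the proof of Theorem \ref{theoremC-change13}, applied to the contracted tensor $T_{ij}$ in place of the full $T_{hijk}$. The forward implication is immediate: if $(M,F)$ is Riemannian then $C_{ijk}$, all of its traces, and all of its vertical covariant derivatives vanish identically, so that $T_{ij}=F\,C_i|_j+l_i\,C_j+l_j\,C_i$ is zero term by term.

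For the converse, I would start from $T_{ij}=0$, that is, $F\,C_i|_j+l_i\,C_j+l_j\,C_i=0$, and contract this identity against the semi-concurrent vector field $B^i$. Two facts are inherited verbatim from the proof of Theorem \ref{theoremC-change13}. First, tracing the SC-condition $B^h\,C_{hij}=0$ with $g^{ij}$ yields $B^i\,C_i=0$. Second, $B^i|_j=\dot{\partial}_j B^i+B^m\,C^i_{mj}=0$, since $B^i$ depends only on $x$ and $B^m\,C^i_{mj}=g^{ir}B^m\,C_{rmj}=0$ by SC. Combining these via the Leibniz rule gives $B^i\,C_i|_j=(B^i\,C_i)|_j-B^i|_j\,C_i=0$, so only the middle term of the contraction survives, producing $\tfrac{B_0}{F}\,C_j=0$. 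Invoking \eqref{lemmab} ($B_0\neq 0$) from Lemma \ref{eguchi}, one concludes $C_j=0$.

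The remaining step is the passage from $C_j=0$ to $C_{ijk}=0$, which I would dispatch by Deicke's theorem; this is legitimate precisely because of the positive-definiteness built into condition (d) of Definition \ref{fin.struc.}. This is the step that deserves the most attention, because it is where the regularity hypothesis really bites: the pseudo-Finsler example discussed after Theorem \ref{C-change7} exhibits a space with $C_i=0$ but $C_{ijk}\neq 0$, showing that in the indefinite or conic setting the final implication would fail and the statement of the corollary would genuinely require supplementary hypotheses. Granted Deicke's theorem, the proof concludes: $C_{ijk}=0$, so $(M,F)$ is Riemannian.
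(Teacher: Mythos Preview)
Your proof is correct and is exactly the argument the paper has in mind: the paper states the corollary as an ``immediate result'' without further proof, and the natural way to supply the details is precisely what you do---mirror the contraction-by-$B^i$ computation from Theorem~\ref{theoremC-change13} to obtain $C_j=0$, and then close with Deicke's theorem (which the paper cites as~\cite{deicke} and whose dependence on positive-definiteness it explicitly flags). Your careful justification of $B^i\,C_i|_j=0$ via the Leibniz rule and your remark about why regularity is essential at the last step are both on point.
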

\begin{remark}
\em{
Theorem $\ref{theoremC-change13}$ and corollary $\ref{C-change13}$ generalize the corresponding results of  Masumoto-Eguchi \cite{meguchi} for concurrent vector fields.
}
\end{remark}


\section{Special case: Concurrent vector fields}
As far as the authors know the first two papers which introduced the concept of a concurrent vector field on Finsler manifolds are Tachibana \cite{Tachibana} and Masumoto-Eguchi \cite{meguchi}.
Tachibana claimed that a necessary and sufficient condition for a Finsler manifold to admit a concurrent vector field is that its line element is expressible in the form
\begin{equation}\label{Tachibana-form}
  ds^2=(dx^n)^2+(x^n)^2H(x^\alpha,dx^\alpha), \,\,\, \alpha=1,..., n-1,
\end{equation}
where $H(x^\alpha,dx^\alpha)$ is the square of the line element of an arbitrary $(n-1)$-dimensional Finsler manifold.
\begin{theorem}\em{[Tachibana]}\\
A Finsler manifold $(M,F)$ admits a concurrent vector field if and only if $F$ satisfies $(\ref{Tachibana-form})$.
\end{theorem}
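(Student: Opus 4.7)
The plan is to prove the two directions separately: for sufficiency, to exhibit a concurrent vector field on the model; for necessity, to straighten $X$ into a radial direction and read off the metric. I would start with sufficiency, which is a direct check. Given $F^2 = (y^n)^2 + (x^n)^2\,H(x^\alpha, y^\alpha)$, I take $X^i(x) := -x^n\,\delta^i_n$ as the candidate. Because $(y^n)^2$ is quadratic in $y^n$ and $H$ is $y^n$-independent, one has $\dot\partial_n\dot\partial_i\dot\partial_j F^2 = 0$, so $C_{nij}=0$ and hence $X^h C_{hij} = -x^n C_{nij} = 0$. The horizontal condition follows by computing the Cartan Christoffel symbols associated with $g_{nn}=1$, $g_{n\alpha}=0$, $g_{\alpha\beta}=(x^n)^2 h_{\alpha\beta}(x^\gamma, y^\gamma)$; only $\Gamma^\alpha_{n\beta}=\tfrac{1}{x^n}\delta^\alpha_\beta$ is relevant, and $X^i{}_{|j}=\partial_j X^i + X^n\Gamma^i_{nj} = -\delta^i_j$ follows immediately.

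Necessity is the substantive direction, and the plan is to recover a potential for $X$ and use it as a coordinate. First, from $X^h C_{hij}=0$ I derive $\dot\partial_k X_i = 2C_{ijk}X^j = 0$, so $X_i$ depends only on $x$. Next, from $X^i{}_{|j}=-\delta^i_j$ and $h$-metricity of the Cartan connection I obtain $X_{i|j}=-g_{ij}$; antisymmetrization combined with symmetry of the Cartan $h$-Christoffels yields $\partial_j X_i = \partial_i X_j$. I then introduce $\phi := \tfrac12 X_iX^i$ and apply the concurrent condition on both factors to obtain $\phi_{|j}=-X_j$; since $\phi$ is $x$-only, this reads $\partial_j\phi = -X_j$. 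I choose local coordinates $(x^1,\dots,x^{n-1},x^n)$ with $x^n := \sqrt{2\phi}$ and $x^\alpha$ any transverse completion. In these coordinates $X=-x^n\partial_n$, and substituting into $\partial_n\phi=-X_n$ and $\partial_\alpha\phi=-X_\alpha$ forces $g_{nn}=1$ and $g_{n\alpha}=0$.

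It remains to pin down the form of $g_{\alpha\beta}$. Feeding $X^\alpha=0$ and $X^n=-x^n$ into $X^\alpha{}_{|\beta}=-\delta^\alpha_\beta$ forces $\Gamma^\alpha_{n\beta}=\tfrac{1}{x^n}\delta^\alpha_\beta$; combined with $g_{n\alpha}=0$ and the Cartan Christoffel formula this reduces to $\delta_n g_{\alpha\beta} = \tfrac{2}{x^n}g_{\alpha\beta}$, whose integration (once $\delta_n$ is shown to collapse to $\partial_n$ on this tensor) gives $g_{\alpha\beta} = (x^n)^2 h_{\alpha\beta}$, exhibiting the line element as in \eqref{Tachibana-form}. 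The hardest part will be precisely this last integration: in the Riemannian case the ODE integrates on the nose, but in the Finsler setting I have to control the vertical piece $N^r_n\dot\partial_r g_{\alpha\beta}$ hidden inside $\delta_n$, which requires showing that $N^\alpha_n$ vanishes and that $g_{\alpha\beta}$ carries no $y^n$-dependence. I expect that using the second concurrent condition a second time---to force the spray coefficients $G^n$ and $G^\alpha$ to be quadratic in $y^n$ exactly as in the model---will close this gap; this is the one place where the genuinely Finslerian (as opposed to Riemannian) structure demands real care.
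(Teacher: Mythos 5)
You should first be aware that the paper does not actually prove this statement: it is quoted as Tachibana's claim, and the surrounding text explicitly records that Matsumoto and Eguchi found Tachibana's proof unclear, and that Matsumoto could only verify the sufficiency direction (that $(0,\dots,0,-x^n)$ is concurrent for a metric of the form \eqref{Tachibana-form}), not the necessity. The paper's own follow-up result is deliberately conditional (``Assuming that Tachibana's theorem is true\dots''). So there is no proof in the paper to compare yours against, and the burden on your necessity argument is correspondingly high.

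Your sufficiency direction is fine and coincides with Matsumoto's observation. In the necessity direction the preparatory steps are sound: $\dot{\partial}_k X_i = 2X^jC_{ijk}=0$, the closedness of $X_i\,dx^i$, the potential $\phi=\tfrac12 X_iX^i$ with $\partial_j\phi=-X_j$, and the adapted coordinates giving $X=-x^n\partial_n$, $g_{nn}=1$, $g_{n\alpha}=0$ and $\dot{\partial}_n g_{ij}=0$ (though you must choose the transverse coordinates $x^\alpha$ constant along the flow of $X$, not ``any transverse completion'', and work away from zeros of $X$ so that $\phi>0$). The genuine gap is exactly where you flag it: $\Gamma^\alpha_{n\beta}=\tfrac{1}{x^n}\delta^\alpha_\beta$ yields $\delta_n g_{\alpha\beta}=\tfrac{2}{x^n}g_{\alpha\beta}$, that is, $\partial_n g_{\alpha\beta}-2N^\gamma_n C_{\gamma\alpha\beta}=\tfrac{2}{x^n}g_{\alpha\beta}$, and unless the term $N^\gamma_n C_{\gamma\alpha\beta}$ is killed you cannot integrate to $g_{\alpha\beta}=(x^n)^2h_{\alpha\beta}(x^\gamma,y^\gamma)$. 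Your proposed remedy --- forcing the spray to be quadratic in $y^n$ ``as in the model'' --- presupposes structure you have not derived; nothing in the hypotheses obviously gives $N^\gamma_n=0$ or $N^\gamma_n C_{\gamma\alpha\beta}=0$. This is precisely the step that has resisted proof since 1950, so the proposal does not establish the theorem; at best it reduces the necessity direction to the open problem of controlling the nonlinear connection along the concurrent direction. Note finally that, in view of Theorem \ref{our-Tachibana-theorem}, a complete proof of necessity for regular metrics would force every regular Finsler manifold admitting a concurrent vector field to be Riemannian, which is essentially the paper's concluding conjecture --- a further indication that the missing step is not a routine computation.
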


Matsumoto and Eguchi \cite{meguchi} remarked however that the proof of Tachibana's theorem is not clear.  In his book \cite{Matsumoto-book},  Matsumoto argued that, for metrics of the form \eqref{Tachibana-form}, the vector field $(0,...,0,-X^n)$ is certainly concurrent, but he could not see that the necessity of Tachibana's theorem should hold. In the following we prove that the form \eqref{Tachibana-form} implies that the metric is actually Riemannain.

\begin{theorem}\label{our-Tachibana-theorem}
A (regular) Finsler metric of the form $(\ref{Tachibana-form})$ is  Riemannian.
\end{theorem}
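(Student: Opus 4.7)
The plan is to write the given line element in the form $F^2 = (y^n)^2 + (x^n)^2\,H(x^\alpha, y^\alpha)$, with $\alpha$ ranging over $1,\dots,n-1$ and $y^i$ standing for $dx^i$. Since $H$ is, by hypothesis, the squared norm of an $(n-1)$-dimensional Finsler function, $H$ is positively homogeneous of degree $2$ in the variables $y^\alpha$.

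The heart of the argument is to exploit the regularity of $F$ more strongly than Tachibana and Matsumoto did. Condition (b) of Definition \ref{fin.struc.} demands that $F^2$ be $C^\infty$ on the entire slit tangent bundle $\mathcal{T}M = TM \setminus \{0\}$. In particular, at points $(x,y) \in \mathcal{T}M$ with $x^n \neq 0$, all $y^\alpha = 0$, and $y^n \neq 0$ (which belong to $\mathcal{T}M$ since $y \neq 0$), the function $F^2$ is smooth. Solving the defining identity for $H$,
\begin{equation*}
H(x^\alpha, y^\alpha) \;=\; \frac{F^2(x,y) - (y^n)^2}{(x^n)^2},
\end{equation*}
shows that $H$, viewed as a function of the $y^\alpha$ at fixed $x$, extends smoothly through the origin $y^\alpha = 0$ of the fiber. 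This is precisely the extra regularity that an intrinsic $(n-1)$-dimensional Finsler function would \emph{not} enjoy.

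I would then invoke the standard fact that a real-valued function on $\mathbb{R}^{n-1}$ which is positively homogeneous of degree $2$ and of class $C^2$ at the origin must be a quadratic form. The quickest justification is a Taylor expansion of $H$ about $y^\alpha = 0$: homogeneity of degree $2$ forces $H(x,0)=0$ and $\dot{\partial}_\alpha H(x,0)=0$, so $H = \tfrac{1}{2}a_{\alpha\beta}(x)\,y^\alpha y^\beta + R(x,y')$ with $R(x,y') = o(|y'|^2)$; applying the homogeneity relation $R(x,\lambda y') = \lambda^2 R(x,y')$ and letting $\lambda \to 0^+$ forces $R \equiv 0$. Consequently
\begin{equation*}
F^2(x,y) \;=\; (y^n)^2 + (x^n)^2\,a_{\alpha\beta}(x)\,y^\alpha y^\beta
\end{equation*}
is a quadratic form in $y$ with coefficients depending only on $x$, so $(M,F)$ is Riemannian.

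The only delicate point, and the step I would expect to require the most care, is pinpointing exactly where the regularity hypothesis bites: namely, that even though the intrinsic $(n-1)$-dimensional Finsler function $\sqrt{H}$ is only guaranteed smooth off the zero section of its own tangent bundle, the requirement that $F$ itself be smooth on $\mathcal{T}M$ forces $H$ to be smooth across the locus $\{y^\alpha = 0,\,y^n \neq 0\}$ of the ambient tangent space. Once this observation is made explicit, the Euler-type homogeneity argument closes the proof without further difficulty.
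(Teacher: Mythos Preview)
Your proof is correct and follows essentially the same approach as the paper: both arguments observe that the regularity of $E=F^2$ on the full slit tangent bundle forces $H$ to extend smoothly to the locus $\{y^\alpha=0,\,y^n\neq0\}$, and then use that a function positively homogeneous of degree~$2$ and $C^2$ at the origin must be quadratic. Your version is in fact more explicit than the paper's---you solve for $H$ in terms of $F^2$, note the requirement $x^n\neq0$, and supply the Taylor-expansion justification for the homogeneity step---but the underlying idea is identical.
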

\begin{proof}
Equation $(\ref{Tachibana-form})$ can be written, in terms of the energy function $E$, in the form
$$E=(y^n)^2+(x^n)^2H(x^\alpha,y^\alpha), \,\,\, \alpha=1,..., n-1.$$
Since $E$ is a Finslerian energy function, then it is smooth on the whole of $\T M$ and particularly on the direction $(0,...,0,y^n)$. Consequently, $H$ is also smooth on $\T M$ and particularly on the direction $(0,...,0,y^n)$.
But $H$  does not depend on $y^n$ and hence the section  $(0,...,0,y^n)(\equiv \{(0_{x},...,0_{x},y^n);\, x \in M\})$  can be identified with the zero section of the $(n-1)$-dimensional  space. Now, $H$ is smooth, and particularly $C^2$ on $(0,...,0,y^n)\equiv (0,...,0) $ and homogenous of degree $2$, then $H$ is a polynomial of degree $2$. Hence, $H$ is quadratic in $y$, which means that  $E$ is Riemannian.
\end{proof}

As a direct consequence of Theorem $\ref{our-Tachibana-theorem}$, we have
\begin{theorem}\label{ourthmend}
Assuming that Tachibana's theorem is true, a Finsler metric admitting a concurrent vector field is Riemannian. Consequently, there is no regular Finsler non-Riemannian metric admitting a concurrent vector field.
\end{theorem}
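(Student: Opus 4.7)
The plan is to deduce this theorem as an immediate corollary of Tachibana's characterization combined with Theorem \ref{our-Tachibana-theorem}. First I would take a regular Finsler manifold $(M,F)$ that admits a concurrent vector field, i.e.\ a vector field $X^i(x)$ satisfying the $C$-condition \eqref{CVF}. Invoking Tachibana's theorem as a hypothesis (as flagged by the phrase ``assuming that Tachibana's theorem is true''), I conclude that $F$ must be of the normal form \eqref{Tachibana-form}, namely
\[
 ds^{2}=(dx^{n})^{2}+(x^{n})^{2}H(x^{\alpha},dx^{\alpha}),\qquad \alpha=1,\ldots,n-1.
\]

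Next I would apply Theorem \ref{our-Tachibana-theorem} directly: any regular Finsler metric of this form has already been shown to be Riemannian, because the smoothness of $E=\tfrac12 F^{2}$ along the direction $(0,\ldots,0,y^{n})$ forces the $2$-homogeneous function $H$ to be quadratic in $y^{\alpha}$. Chaining the two implications gives the first assertion, that any regular Finsler manifold admitting a concurrent vector field is Riemannian.

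The second assertion is then the contrapositive restatement: if a regular Finsler metric were non-Riemannian and admitted a concurrent vector field, the above chain would force it to be Riemannian, a contradiction. Hence no such metric exists.

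The only genuinely nontrivial ingredient is Theorem \ref{our-Tachibana-theorem}, which is already proved; the present theorem is a packaging step and should not pose any real obstacle. The one subtlety worth flagging in the write-up is the conditional nature of the statement: Matsumoto \cite{Matsumoto-book} pointed out that the necessity direction of Tachibana's theorem was not clearly justified, so the result here is explicitly contingent on that direction being valid. If instead one only assumes the (uncontroversial) sufficiency direction, one still obtains the partial conclusion that every metric of the form \eqref{Tachibana-form} is Riemannian and hence trivially admits a concurrent vector field, recovering the examples but not the full classification.
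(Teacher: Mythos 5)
Your proposal is correct and follows exactly the paper's own route: the paper presents Theorem \ref{ourthmend} as a direct consequence of Tachibana's characterization (taken as a hypothesis) combined with Theorem \ref{our-Tachibana-theorem}, precisely the two-step chain you describe, with the second assertion as the contrapositive. Nothing further is needed.
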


A natural question arises:
\textit{Is a conic Finsler metric of the form \eqref{Tachibana-form}  admitting a concurrent vector field Riemannian?}
The following example gives a negative answer. It gives a non-Riemannian conic Finsler metric of the form \eqref{Tachibana-form} which admits a concurrent vector field.

\begin{example}
\em{
Let $M=\Real^3$ and $E$ be given by
$$E=y_3^2 + x_3^2H=y_3^2 +x_3^2\left(\sqrt{y_1^2+x_1^2y_2^2}+\epsilon y_2\right)^2.$$
where $H$ is a 2-dimensional Finsler metric of Randers type given by
$$H=\sqrt{y_1^2+x_1^2y_2^2}+\epsilon y_2.$$
One can easily show that $E$ is not smooth on the directions $(0,0,\pm 1)$. Hence, $E$ is defined on the conic set $D\subset TM$,
$$D=TM-\{(x_i,y_i) \in TM | \, y^2_{1}+y^2_{2}\neq 0 \} .$$
 The components $g_{ij}$ of the metric tensor  are given by
$$g_{11}=\frac{x_3^2\, (\epsilon\, x_1^4\, y_2^5 +\epsilon\, x_1^2 \,y_1^2 \,y_2^3 +\sqrt{y_1^2+x_1^2\,y_2^2}\,(x_1^2 \,y_2^2\,(y_1^2+x_1^2\,y_2^2) - x_1^2\, y_1^2\, y_2^2+2\, y_1^2\, (y_1^2+x_1^2\,y_2^2)- y_1^4))}{(y_1^2+x_1^2\,y_2^2)^{5/2}},$$
$$g_{22}=\frac{x_3^2\, (2\, \epsilon \,x_1^4\, y_2^3 +3\,\epsilon \,x_1^2\, y_1^2\, y_2 +x_1^2\,(y_1^2+x_1^2\,y_2^2)^{3/2} +\epsilon^2 \,(y_1^2+x_1^2\,y_2^2)^{3/2} )}{(y_1^2+x_1^2\,y_2^2)^{3/2}},$$
$$g_{12}=\frac{\epsilon \,x_3^2\, y_1^3 }{(y_1^2+x_1^2\,y_2^2)^{3/2}},\qquad g_{33}=1.$$
The components $C_{ijk}$ of the Cartan tensor are given by
$$C_{111} = -\frac{3}{2}\frac{\epsilon\, x_1^2\, x_3^2\, y_1 \, y_2^3  }{(y_1^2+x_1^2\,y_2^2)^{5/2}}, ~~~~~~~~~~~~~~
  C_{112}=\frac{3}{2}\frac{\epsilon\, x_1^2\, x_3^2 \,y_1^2 \, y_2^2  }{(y_1^2+x_1^2\,y_2^2)^{5/2}}$$
$$~~C_{122} = -\frac{3}{2} \frac{\epsilon \,x_1^2\, x_3^2 \,y_1^3  \, y_2}{(y_1^2+x_1^2\,y_2^2)^{5/2}}, ~~~~~~~~~~~~~~
  C_{222}=\frac{3}{2} \frac{\epsilon\, x_1^2\, x_3^2 \,y_1^4 }{(y_1^2+x_1^2\,y_2^2)^{5/2}}.$$
This metric admits a concurrent vector field given by $B^1(x)=B^2(x)=0,B^3(x)=x_3$.
Moreover, if we let $B^3=f(x)$, an arbitrary function of $x$, then $B=(0,0,f(x))$ is a semi-concurrent vector field. Clearly, the given metric is not Riemannian.
}
\end{example}


\section{Examples in dimension 4}
As has been shown, the problem mentioned in the introduction is completely solved for the $2$-dimensional case and also for several specific cases, where the Finsler manifold under consideration is subject to certain conditions. It turns out that in the general case ($\dim M\ge3$ and no additional restrictions), a
Finsler metric of the form \eqref{Tachibana-form} admitting a concurrent vector field is necessarily Riemannian, whereas a \emph{conic} Finsler metric of the the same form \eqref{Tachibana-form} admitting a concurrent vector field is not necessarily Riemannian. In what follows we present some examples of Riemannian and conic Finslerian metrics admitting semi-concurrent vector fields. In the examples considered all calculations are preformed using Maple program \cite{NSMaple}.

\vspace{7pt}
 Let us consider the manifold $M= \mathbb{R}^4$. A general form of a Finsler metric admitting a semi-concurrent vector field is given by:
\begin{eqnarray}\label{general-form}
E&=&y_1(F_1(x)y_2+y_4)F_2\left(\frac{(A_1x_1+A_2x_2+A_3x_3+A_4x_4+A_7)y_1+A_5y_2+A_6y_3}{y_1}\right)\nonumber\\
&&+F_3\left(x, -\frac{A_5y_2+A_6y_3}{y_1}\right)y_1^2-F_4(x)(A_5y_2^2+A_6y_2y_3)+F_5(x)y_1^2\\
&&+F_6(x)y_1y_2+F_7(x)y_2^2+F_8(x)y_4^2,\nonumber
\end{eqnarray}
where $A_1,..., A_7$ are arbitrary constants and $F_1, ..., F_8$ are arbitrary smooth functions on $TM$ or a subset of $TM$ such that $E$ is an energy function.
\par
By appropriate choices of $A_1,..., A_7;F_1, ..., F_8$, the energy function $E$ may be Riemannian or conic (pseudo) Finslerian which admits semi-concurrent vector fields, as shown below.

To find the components of the required semi-concurrent vector field $B=(B_1,B_2,B_3,B_4)$, we first find the metric components $g_{ij}$ corresponding to the above energy function. From this we calculate the Cartan tensor components $C_{ijk}$. The required components of the semi-concurrent vector field $B$ are then obtained by solving the system of equations $B^hC_{hij}=0$. These turn out to be
\vspace{-3pt}
\begin{eqnarray*}
B^1&=&0\\
B^2&=&f(x);\,\,f(x)\neq0 \\
B^3&=&- \frac{A_5}{A_6}f(x)\\
B^4&=&-f(x)F_1(x).
\end{eqnarray*}
The next examples, corresponding to different choices of the $A_{i}$'s and $F_{i}$'s represent some special classes of \eqref{general-form}.

\begin{example}
\em{
Set
  $$A_1=A_2=A_3=A_4=A_7=0;\,\, F_{1}(x)=F_4(x)=0,\,F_2(u)=u,\,F_3(x,u)=u^2$$
  in \eqref{general-form} to obtain
 $$E=y_4(A_5y_2+A_6y_3)+(A_5y_2+A_6y_3)^2+F_5(x)y_1^2+F_6(x)y_1y_2+F_7(x)y_2^2+F_8(x)y_4^2.$$
 This choice yields the metric components

 $$g=\left(
   \begin{array}{cccc}
     F_5(x) & \frac{1}{2}F_6(x) & 0 & 0 \\
     \frac{1}{2}F_6(x) & A_5^2+F_7(x) & A_5A_6 & \frac{1}{2}A_5 \\
     0 & A_5A_6 & A_6^2 &  \frac{1}{2}A_6 \\
     0 & \frac{1}{2}A_5 & \frac{1}{2}A_6 & F_8(x) \\
   \end{array}
 \right).$$
 Clearly, the above matrix has rank 4, and so $g$ is non-degenerate. Consequently, $g$ is a pseudo-Riemannian metric. This metric is positive definite if the leading principal minors of the above matrix are all positive. By some  computations, the leading principal minors of $g$ are
 $$F_5, \quad A_5^2F_5+F_5F_7-\frac{F_6^2}{4}, \quad \frac{A_6^2}{4}(4F_5F_7-F_6^2), \quad \frac{A_6^2}{16}(4F_5F_7-F_6^2)(4F_8-1).$$
 Hence, the metric $g$ is is Riemannian if $F_5>0, \,\, F_5F_7>\frac{F_6^2}{4}, \,\, F_8>\frac{1}{4}$.\\
}
\end{example}

\vspace{-0.8cm}
\begin{example}
\em{
Set
 $$A_1=A_2=A_3=A_4=A_5=A_7=0;\,\, F_1(x)=F_4(x)=0,\,\, F_2(u)=u,\,\, F_3(x,u)=u^4$$
 in \eqref{general-form} so that
 $$E=A_6y_3y_4+\frac{A_6^4\,y_3^4}{y_1^2}+F_5(x)y_1^2+F_6(x) y_1y_2+F_7(x)y_2^2+F_8(x)y_4^2.$$
 This energy function represents  a conic Finslerian metric whose conic domain has the form
 $$D=\{(x,y) \in TM \mid y_1 \neq 0\}.$$
 The metric $g$ is given by
 $$g=\left(
   \begin{array}{cccc}
     \frac{3A_6^4y_3^4+F_5(x)y_1^4}{y_1^4} & \frac{1}{2}F_6(x) &- \frac{4A_6^4y_3^3}{y_1^3} & 0 \\
     \frac{1}{2}F_6(x) & F_7(x) & 0 & 0 \\
     - \frac{4A_6^4y_3^3}{y_1^3}& 0 & \frac{6A_6^4y_3^2}{y_1^2} &  \frac{1}{2}A_6 \\
     0 & 0 & \frac{1}{2}A_6 & F_8(x) \\
   \end{array}
 \right).$$
 As the matrix $g$ has rank 4, the metric tensor $g$ is thus pseudo-Finslerian. It can be shown that the leading principal minors of $g$ are:
 $$\frac{3A_6^4y_3^4+5y_1^4}{y_1^4}, \quad \frac{12A_6^4 F_7y_3^4-F_6^2y_1^4+20F_7y_1^4}{4y_1^4}, \quad
 \frac{A_6^4y_3^2(4A_6^4F_7y_3^4+3F_6^2y_1^4+24F_6y_1^3y_3+60F_7y_1^4)}{2y_1^6}, $$
 $$\frac{A_6^2(32A_6^6F_7F_8y_3^6-12A_6^4F_7y_1^2y_3^4-24A_6^2F_6^2F_8y_1^4y_3^2+480A_6^2F_7F_8y_1^4y_3^2+F_6^2y_1^6-20F_7y_1^6)}{16y_1^6}.$$
Therefore, $g$ is conic Finslerian if the above leading principal minors are all positive.
The non-vanishing components of Cartan tensor are
$$C_{111} = -\frac{6\,A_6^4\,y_3^4}{y_1^5}, \,\,\,  C_{113} = \frac{6\,A_6^4\,y_3^3}{y_1^4}, \,\,\,
C_{133} =- \frac{6\,A_6^4\,y_3^2}{y_1^3}, \,\,\,  C_{333} = \frac{6\,A_6^4\,y_3}{y_1^2}$$
In this case, the semi-concurrent vector fields $B$ is given by
$$B^1=B^3=B^4=0,\,\,B^2=f(x);\,\,f(x)\neq0,$$
with $B^hC_{hij}=0$.
}
\end{example}

\vspace{5pt}
\begin{example}
\em{
The following choice of some arbitrary constants and functions in \eqref{general-form}, namely
$$A_1=A_2=A_3=A_4=A_7=0;\,\, F_2(u)=u^2,\,\, F_3(x,u)=u,\,\,  F_4=0,$$
represents a more nontrivial example of a conic pseudo-Finsler metric defined by
$$E= \frac{({F_1}(x)\,y_2+y_4)(A_5\,y_2+A_6\,y_3)^2}{y_1}-(A_5\,y_2+A_6\,y_3)y_1+{F_5}(x)\,y_1^2+{F_6}(x)\,y_1\,y_2+{F_7}(x)\,y_2^2+{F_8}(x)\,y_4^2.$$
The non-vanishing components of the metric tensor are given by
\begin{eqnarray*}
g_{11 } &=&\frac{{ A_5}^{2}{y_2}^{2}({F_1}(x)\,y_2 +y_4)+2 A_5\, A_6\,{F_1}(x) {y_2}{y_3}(y_2+y_4) +{ A_6}^{2}\,{ y_3}^{2}(\,{F_1}(x)\, y_2 +y_4)+5{y_1}^{3}}{{y_1}^{3}} \\
g_{12 } &=& -\frac{1}{2}\,\frac{{ A_5}^{2}\,y_2(3{F_1}(x)\,y_2 +2\,y_4)+4 A_5\, A_6\,{F_1}(x) {y_2}\,{y_3}+{ A_6}^{2}\,{F_1}(x)\, {y_3}^{2}  +2{A_5}\,A_6\, y_3\,y_4  -{ F_6}(x)\,{y_1}^{2}}{{y_1}^{2}}\\
g_{22 } &=& \frac{3{A_5}^{2}\,{F_1}(x)\, y_2 +2\,A_5\,A_6\,{F_1}(x)\,y_3++{A_5}^{2}\,y_4 +{F_7}(x)\, y_1 }{ y_1} \\
g_{13 } &=& -\frac{1}{2}\frac{A_6\left(2A_5{ F_1}(x)\, { y_2}^2 +2{A_6}\,F_1(x)\,{ y_2}y_3 +2{ A_5} \,{ y_2}\,y_4 +2{ A_6} \,{ y_3}\,y_4+{y_1}^{2}\right) }{ {y_1}^{2}},\\
g_{23} &=& \frac{A_6\,\left(2 A_5\, { F_1}(x) \,{y_2} +A_6\,{F_1}(x)\, y_3 +A_5\,y_4 \right)}{y_1},~~~~~~g_{14 } = -\frac{1}{2}\,\frac{\left(A_5\, y_2 + A_6\, y_3 \right)^{2}}{{y_1}^{2}} \\
g_{24} &=& \frac{A_5\left(A_5\, y_2 +A_6\, y_3 \right)}{y_1},~~~~~~~~~~~~~~~~~~~~~~~~~~~~~~~~~~
            g_{33} = \frac{{A_6}^{2}\,\left({F_1}(x)\,y_2+y_4 \right)}{y_1} \\
g_{34} &=& \frac{A_6\left(A_5\,y_2 + A_6\, y_3 \right)}{y_1},~~~~~~~~~~~~~~~~~~~~~~~~~~~~~~~~~~
            g_{44} = {F_8}(x)
\end{eqnarray*}
Therefore, the non-vanishing components of Cartan tensor are given by
\begin{eqnarray*}
C_{111} &=& -\frac{3}{2}\,\frac{A_5\,{ y_2}^{2}({F_1}(x)[A_5 y_2+2 A_6  y_3]+A_5y_4)+{ A_6}^{2}\,{y_3}^{2}({F_1}(x)\,y_2 +y_4 )+2\, A_5\, A_6\,y_2\,y_3\, y_4 }{{y_1}^{4}} \\
C_{112} &=& \frac{1}{2}\,\frac{3 A_5^2\,{ F_1}(x)\, { y_2}^{2}+4\,A_5\,A_6\, {F_1}(x)\, y_2\,y_3 +{A_6}^{2}\,{ F_1}(x) \,{ y_3}^{2}+2{ A_5}^{2}\,y_2\, y_4 +2\, A_5\,A_6\,y_3\,y_4}{{ y_1}^{3}} \\
C_{113} &=& \frac{{ A_6} \,\left(A_5\, { F_1}(x) { y_2}^{2}+A_6 \,{ F_1}(x)\, y_2\, y_3 +A_5\,y_2\, y_4 +A_6\,y_3\, y_4 \right)}{{ y_1}^{3}} \\
C_{122} &=& -\frac{1}{2}\,\frac{A_5\,\left(3\, A_5\, F_1(x)\, y_2 +2\, A_6 F_1(x)\, y_3 +A_5\, y_4 \right)}{y1^2} \\
C_{123} &=& -\frac{1}{2}\,\frac{A_6\,\left(2\, A_5\, F_1(x)\, y_2 +A_6\, F_1(x)\, y_3 + A_5\, y_4 \right)}{y_1^2}
\end{eqnarray*}

\begin{eqnarray*}
C_{124} &=& -\frac{1}{2}\,\frac{A_5\,\left( A_5\, y_2 +A_6\, y_3\right)}{y_1^2} ,~~~~~~~~~~~~~~~~~~~~~~~\,
             C_{133} = -\frac{1}{2}\,\frac{A_6^2(F_1(x)\, y_2 +y_4)}{y_1^2} \\
C_{134} &=&-\frac{1}{2}\,\frac{A_6\left( A_5\, y_2+ A_6\, y_3 \right)}{y_1^2}  ,~~~~~~~~~~~~~~~~~~~~~~~~
             C_{222} = \frac{3}{2}\,\frac{A_5^2\, F_1(x)}{y_1} \\
C_{223} &=& \frac{A_6\, A_5\, F_1(x)}{y_1} ,~~~~~~~~~~~~~~~~~~~~~~~~~~~~~~~~~~~~~
             C_{224} = \frac{1}{2}\,\frac{A_5^2}{y_1} \\
C_{233} &=& \frac{1}{2}\,\frac{A_6^2\, F_1(x)}{y1},~~~~~~~~~~~~~~~~~~~~~~~~~~~~~~~~~~~~~~~\!
             C_{234} = \frac{1}{2}\,\frac{A_6\, A_5}{y_1} \\
C_{114} &=&\frac{1}{2}\,\frac{A_5^{2}{ y_2}^{2}+2\,A_5\,A_6\,y_2\,y_3 +{A_6}^{2}\,{ y_3}^{2}}{{ y_1}^{3}} ,~~~~~~~~~~~\!\!
             C_{334} = \frac{1}{2}\,\frac{A_6^2}{y1}.
\end{eqnarray*}
The semi-concurrent vector field $B$ is given by
$$B(x)=f(x)\frac{\partial}{\partial x^2}-\frac{A_5}{A_6}f(x)\frac{\partial}{\partial x^3}-f(x)F_1(x)\frac{\partial}{\partial x^4}, \,\, f(x)\neq0.$$
Note that in this example most of the components of Cartan tensor and three of the components of the vector field $B$ are alive.
}
\end{example}

\vspace{5pt}
All the above examples are shown to be either Riemannian or conic (pseudo) Finslerian. The only two choices in \eqref{general-form} that produce a regular metric are the following: $A_5=A_6=0$ or $F_2(u)=u,\, F_3(x,u)=u^2$. But these choices  yield a quadratic energy, which means that the metric is Riemannian. We conclude that, in dimension 4, no choice of $A_i$ and $F_i$ in \eqref{general-form} can yield a regular Finsler metric.

All the examples presented in this paper, among other evidences, motivate us to announce the following conjecture.

\section*{Conjuncture}

There is no regular Finsler non-Riemannian metric that admits a semi-concurrent vector field. In other words, a Finsler metric admitting a semi-concurrent vector field is necessarily either Riemannian or conic Finslerian.


\providecommand{\bysame}{\leavevmode\hbox
to3em{\hrulefill}\thinspace}
\providecommand{\MR}{\relax\ifhmode\unskip\space\fi MR }
\providecommand{\MRhref}[2]{%
  \href{http://www.ams.org/mathscinet-getitem?mr=#1}{#2}
} \providecommand{\href}[2]{#2}

\end{document}